\newtheorem{theorem}{Theorem}[section]
\newtheorem{problem}[theorem]{Problem}
\newtheorem{corollary}[theorem]{Corollary}
\newtheorem{conjecture}[theorem]{Conjecture}
\theoremstyle{definition}
\begin{document}

\title[Coexistence of coiled surfaces and spanning surfaces]{Coexistence of coiled surfaces and spanning surfaces for knots and links}

\author{Makoto Ozawa}
\address{Department of Natural Sciences, Faculty of Arts and Sciences, Komazawa University, 1-23-1 Komazawa, Setagaya-ku, Tokyo, 154-8525, Japan}
\email{w3c@komazawa-u.ac.jp}
\thanks{This work was supported by JSPS KAKENHI Grant Number 23540105.
}

\subjclass[2010]{Primary 57M25; Secondary 57Q35}

\keywords{knot, link, coiled surface, spanning surface, incompressible surface, Neuwirth conjecture}

\begin{abstract}
It is a well-known procedure for constructing a torus knot or link that first we prepare an unknotted torus and meridian disks in the complementary solid tori of it, and second smooth the intersections of the boundary of meridian disks uniformly. Then we obtain a torus knot or link on the unknotted torus and its Seifert surface made of meridian disks.
In the present paper, we generalize this procedure by a closed fake surface and show that the resultant two surfaces obtained by smoothing triple points uniformly are essential.
We also show that a knot obtained by this procedure satisfies the Neuwirth conjecture, and the distance of two boundary slopes for the knot is equal to the number of triple points of the closed fake surface.
\end{abstract}

\maketitle

\section{Introduction}

\subsection{The Neuwirth conjecture}

There are not so many geometrical properties satisfied by all non-trivial knots.
Any knot bounds a minimal (and hence incompressible) Seifert surface \cite{FP}, \cite{S}, and for any non-trivial knot there exists a properly embedded separating, orientable, incompressible, boundary incompressible and not boundary parallel surface in the exterior of the knot \cite{CS}.
The following conjecture asserts that any non-trivial knot can be embedded in a closed surface, similarly to the way a torus knot can be embedded in an unknotted torus.

\begin{conjecture}[Neuwirth Conjecture, \cite{N}]
For any non-trivial knot $K$ in the 3-sphere, there exists a closed surface $F$ containing $K$ non-separatively such that $F$ is essential in the exterior of $K$.
\end{conjecture}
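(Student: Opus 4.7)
The plan is to split by geometric type via the JSJ/geometrization decomposition: torus knots, satellite knots, and hyperbolic knots are handled separately. For a torus knot, the standard Heegaard torus is incompressible in the exterior and carries $K$ as a non-separating simple closed curve, so the conjecture follows immediately. For a satellite knot with companion $C$ and pattern $P \subset V$, one would inductively assume the conjecture for $C$ (inducting on JSJ complexity) to obtain an essential closed surface $F_C \subset S^3$ containing $C$ non-separatingly, then replace a neighborhood of $C$ by the solid torus carrying $P$ so as to transfer $F_C$ to a surface meeting $K$; one adjusts the surface using the winding number of $P$ in $V$ to ensure $K$ lies on it non-separatingly, and checks essentiality by combining the essentiality of $F_C$ with the essentiality of the JSJ torus.

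For hyperbolic knots, the program is to produce $F$ as the union of two essential spanning surfaces $S_1, S_2 \subset E(K)$ whose boundaries are parallel copies of a common slope on $\partial N(K)$. Given such a pair, $S_1 \cup S_2$ completes through an annular collar of $K$ to a closed surface containing $K$; this surface is non-separating in $S^3$ whenever $S_1 \cup S_2$ is connected, and its essentiality in $E(K)$ follows from incompressibility and boundary-incompressibility of the pieces together with non-parallelism across $\partial N(K)$. Candidate slopes are drawn from Hatcher's finiteness theorem on essential boundary slopes, while essentiality of the individual $S_i$ could be established either by Culler--Shalen theory or by direct Haken normal-surface arguments; one then has to verify that at least two distinct essential boundary slopes coexist, or that a single slope supports essential spanning surfaces on both sides of $K$.

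The hard part, and the reason this problem has remained open, is precisely this hyperbolic case: one needs a uniform mechanism guaranteeing, for \emph{every} hyperbolic knot, two essential spanning surfaces with compatible slopes whose union remains essential in $E(K)$. The fake-surface construction developed in this paper produces such coexistent pairs for a new family of knots and realises the Neuwirth conclusion there, but it does not obviously exhaust all hyperbolic knots. Bridging this gap, perhaps by showing that every essential closed surface in a hyperbolic knot exterior arises from some fake-surface smoothing, or by merging the present construction with Culler--Shalen theory to cover the remaining knots, is the central obstacle; short of such a unification, the conjecture cannot be settled by the methods outlined here alone.
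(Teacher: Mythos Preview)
The statement you are attempting to prove is presented in the paper as an open \emph{conjecture}, not a theorem: the paper does not prove it, and the Neuwirth Conjecture remains open in general. The paper's actual contribution is the main theorem and its corollary, which verify the conjecture only for the class of \emph{uniformly twisted} knots via the essential closed fake surface construction. There is therefore no ``paper's own proof'' of this statement to compare against.

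Your proposal is not a proof either, as you explicitly concede in the final paragraph. Beyond the acknowledged hyperbolic gap, the satellite step also has real problems: the induction on ``JSJ complexity'' is not well-founded as stated (the companion $C$ need not be simpler than $K$ in any usable sense), and transferring a Neuwirth surface for $C$ to one for $K$ is far from automatic --- the surface $F_C$ may meet the companion solid torus in curves incompatible with the pattern, and ``adjusting by winding number'' does not in general produce a closed surface containing $K$ non-separatingly while preserving essentiality in $E(K)$. Partial results along these lines exist (see the Ozawa--Rubinstein reference cited in the paper), but they require substantial additional hypotheses and argument. In short, neither the paper nor your outline proves the conjecture; the paper verifies a special case, and your program correctly identifies, but does not close, the central obstruction.
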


Recent results on the Neuwirth conjecture can be seen in \cite{OR}.
Since the Neuwirth conjecture originated in torus knots, we go back to the construction of torus knots in the next subsection.

\subsection{A procedure for constructing torus knots and links}

The following is a well-known procedure for constructing a torus knot or link \cite{L}.
Let $T$ be an unknotted torus in the 3-sphere $S^3$ which decomposes $S^3$ into two solid tori $V_1$ and $V_2$.
Take $p$ mutually disjoint meridian disks $D_1$ of $V_1$ and $q$ mutually disjoint meridian disks $D_2$ of $V_2$.
If we smooth the intersections of $\partial D_1$ and $\partial D_2$ uniformly in $T$, then we can obtain a torus knot or link $K$ of type $(p,q)$.
For each point of $\partial D_1\cap \partial D_2$, we add two triangle regions along this smoothing to $D_1\cup D_2$, and then we obtain a Seifert surface $F_v$ for $K$.
We remark that by the construction, $\chi(F_v)=|D_1|+|D_2|-|\partial D_1\cap \partial D_2|=p+q-pq$ and when $K$ is a knot, $g(F_v)=(p-1)(q-1)/2=g(K)$.
We also have cabling annuli $F_h=T\cap E(K)$, where $E(K)$ denotes the exterior of $K$ in $S^3$.
Moreover when $K$ is a knot, we have $\Delta(\partial F_v, \partial F_h)=|\partial D_1\cap \partial D_2|=pq$, where $\Delta(*,*)$ denotes the distance between two boundary slopes.
We note that $F_v$ is orientable and when $K$ is a knot, $F_h$ is connected.

\subsection{From closed fake surfaces to dual surfaces}

We define three subsets of $\mathbb{R}^3$ as below.

\begin{enumerate}
\item $\Sigma_1=\{ (x,y,z)\in \mathbb{R}^3 | z=0 \}$
\item $\Sigma_2=\{ (x,y,z)\in \mathbb{R}^3 | y=0, z\ge 0 \}$
\item $\Sigma_3=\{ (x,y,z)\in \mathbb{R}^3 | x=0, z\le 0 \}$
\end{enumerate}

A finite 2-polyhedron $P$ is called a {\em closed fake surface} \cite{I} if each of its points has a neighborhood homeomorphic to one of the followings (Figure \ref{type}).
\begin{description}
\item[Type 1] $\Sigma_1$
\item[Type 2] $\Sigma_1\cup \Sigma_2$
\item[Type 3] $\Sigma_1\cup\Sigma_2\cup\Sigma_3$
\end{description}
We will refer to points in closed fake surfaces as points of Type 1, 2 and 3 respectively depending on which of the above three neighborhoods they have.
By $P'$, we shall denote the set of points of Type 2 or 3.
By $P''$, we denote the set of points of Type 3.
A closed fake surface is {\em orientable} if each component of $P-P'$ is orientable.

\begin{figure}[htbp]
	\begin{center}
	\begin{tabular}{ccc}
	\includegraphics[trim=0mm 0mm 0mm 0mm, width=.3\linewidth]{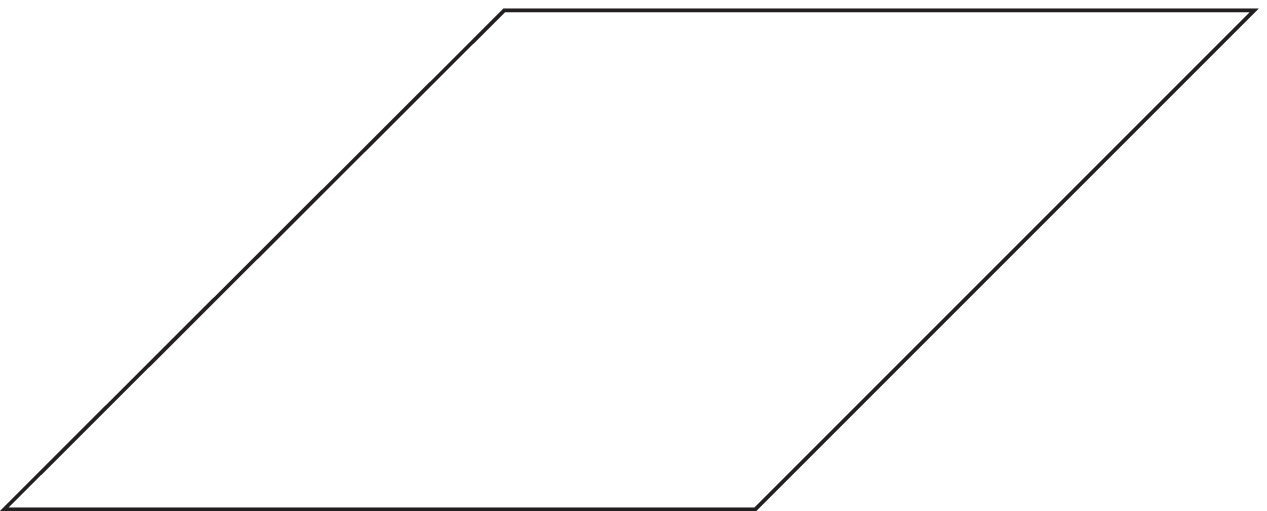} &
	\includegraphics[trim=0mm 0mm 0mm 0mm, width=.3\linewidth]{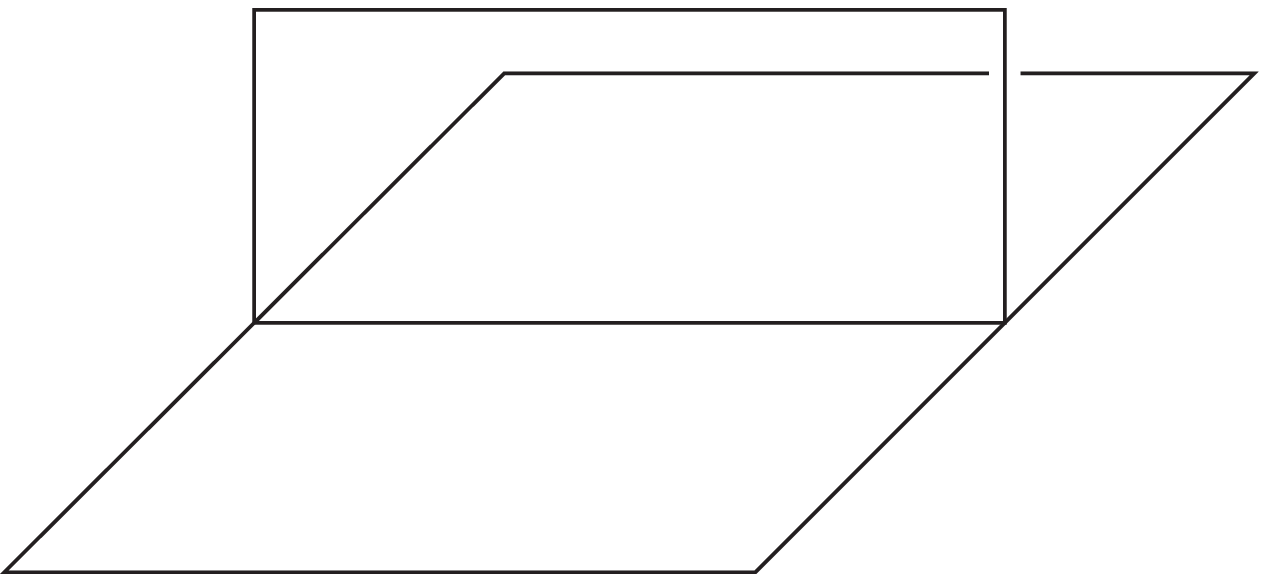} &
	\includegraphics[trim=0mm 0mm 0mm 0mm, width=.3\linewidth]{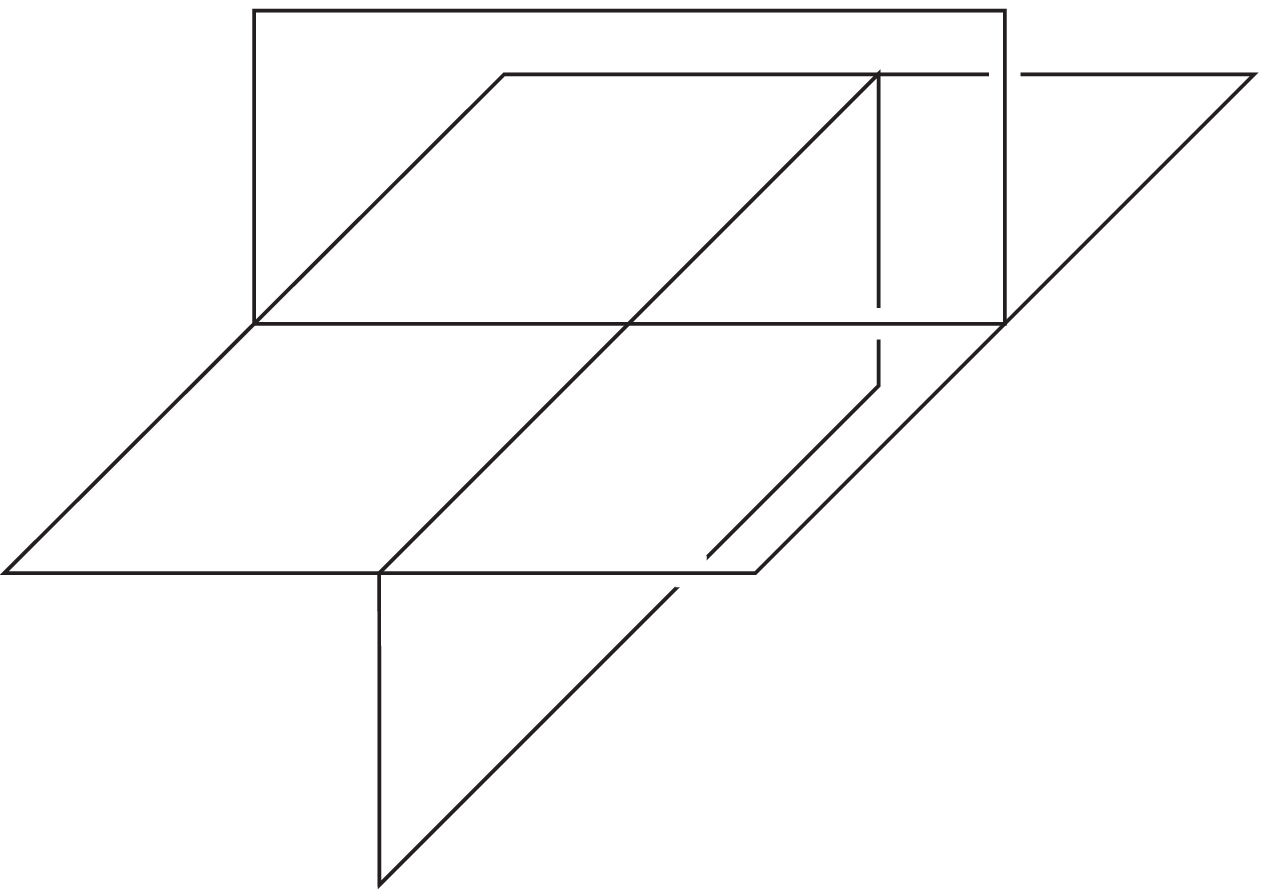} \\
	Type 1 & Type 2 & Type 3
	\end{tabular}
	\end{center}
	\caption{Local neighborhoods of a closed fake surface}
	\label{type}
\end{figure}

We say that a closed fake surface $P$ embedded in $S^3$ has a {\em vertical-horizontal decomposition} $P=P_v\cup P_h$ if $P_h$ is closed subsurfaces of $P$ which corresponds to $(\Sigma_1,\mathbb{R}^3)$ at each neighborhood of points of Type 2 or 3, and $P_v$ is subsurfaces of $P$ which corresponds to $(\Sigma_2,\mathbb{R}^3)$ or $(\Sigma_3,\mathbb{R}^3)$ at each neighborhood of points of Type 2 or 3.
When $P'=\emptyset$, we define $P_h=P$ and $P_v=\emptyset$.

As in the procedure for constructing torus knots and links, for a closed fake surface $P$ with the vertical-horizontal decomposition $P=P_v\cup P_h$, we obtain a knot or link $K$ from $P'$ and the vertical surfaces $F_v$ and the horizontal surfaces $F_h$ from $P_v$ and $P_h$ respectively by smoothing $P$ uniformly as follows.
For each neighborhood of a point of Type 3, we add two triangle regions $\{ (x,y,z)\in \mathbb{R}^3| xy\ge 0, |x+y|\le 1 \}$ to $P_v$.
Then we obtain surfaces from $P_v$ and call it the {\em vertical surfaces} which is denoted by $F_v$.
We note that $\chi(F_v)=\chi(P_v)-|P''|$.
The boundary of $F_v$ consists of disjoint simple closed curves in $P_h$, namely a knot or link, and we denote it by $K$.
The {\em horizontal surfaces} $F_h$ is the horizontal part $P_h$ of $P$ in $E(K)$.
Then we say that $F_v$ and $F_h$ are obtained from $P$ by the {\em $+$-smoothing}, and that $K$ is obtained from $P'$ by the {\em $+$-smoothing} (Figure \ref{smoothing}). 
The {\em $-$-smoothing} of $P$ can be similarly defined, and the results for the $+$-smoothing also hold for the $-$-smoothing.
We note that $F_h$ is always orientable since $P_h$ is closed surfaces in $S^3$, however, $F_v$ is non-orientable in almost all cases.

\begin{figure}[htbp]
	\begin{center}
	\includegraphics[trim=0mm 0mm 0mm 0mm, width=.6\linewidth]{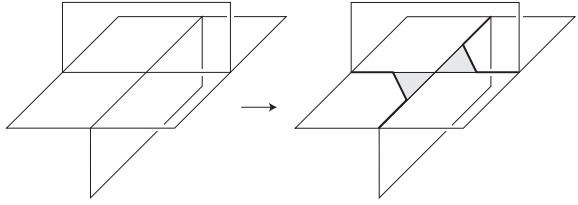}
	\end{center}
	\caption{$+$-smoothing of a closed fake surface $P$}
	\label{smoothing}
\end{figure}

\subsection{Definition of essential closed fake surfaces}
Let $P$ be a closed fake surface embedded in the 3-sphere $S^3$ with the vertical-horizontal decomposition $P=P_v\cup P_h$.
A loop $l$ properly embedded in $P-P'$ is {\em inessential} in $P$ if $l$ bounds a disk $\delta$ in $P-P'$, and $l$ is {\em essential} if it is not inessential.
Let $(\alpha,\partial\alpha)$ be an arc properly embedded in $(P_v, P'-P'')$ or $(P_h, P'-P'')$.
An arc $\alpha$ is {\em inessential} in $P$ if there is an arc $\beta$ in $P'-P''$ such that $\alpha\cup\beta$ bounds a disk in $P_v$ or $P_h$.
Let $D$ be a disk embedded in $S^3$ such that $D\cap P=\partial D\cap (P-P')=\partial D$.
We say that $D$ is a {\em compressing disk} for $P$ if $\partial D$ is essential in $P$.
We say that $D$ is a {\em monogon} if $\partial D\subset P_h-P''$ and $|\partial D\cap P'|=1$.
We say that $D$ is a {\em bigon} if the boundary of $D$ is decomposed into two arcs $\alpha\subset P_v$ and $\beta\subset P_h$ and at least one of $\alpha$, $\beta$ is an essential arc in $P$.
A closed fake surface $P$ embedded in $S^3$ is said to be {\em essential} if 
\begin{enumerate}
\item $S^3-P$ is irreducible, 
\item $P$ has no compressing disk, 
\item $P$ has no monogon, 
\item $P$ has no bigon, and 
\item $P_h$ has no 2-sphere component.
\end{enumerate}

\begin{figure}[htbp]
	\begin{center}
	\begin{tabular}{cc}
	\includegraphics[trim=0mm 0mm 0mm 0mm, width=.25\linewidth]{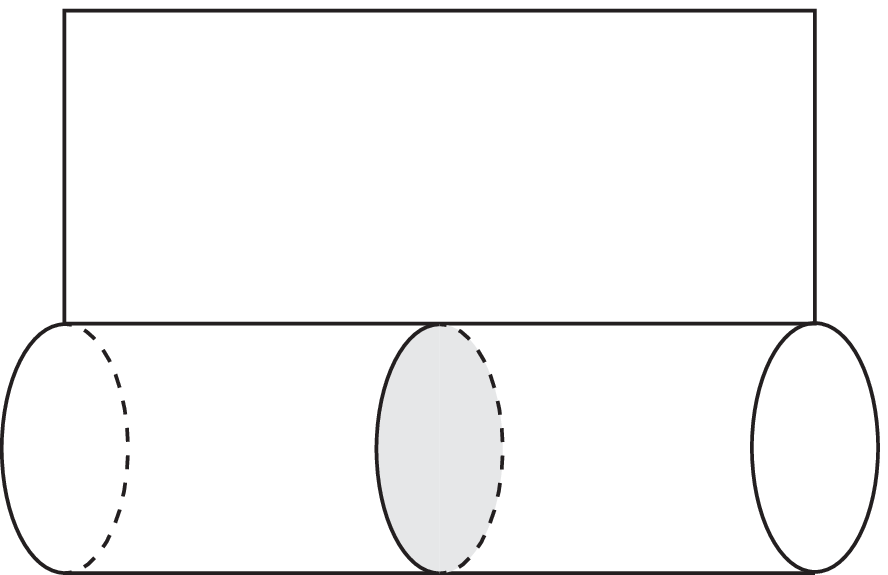} &
	\includegraphics[trim=0mm 0mm 0mm 0mm, width=.4\linewidth]{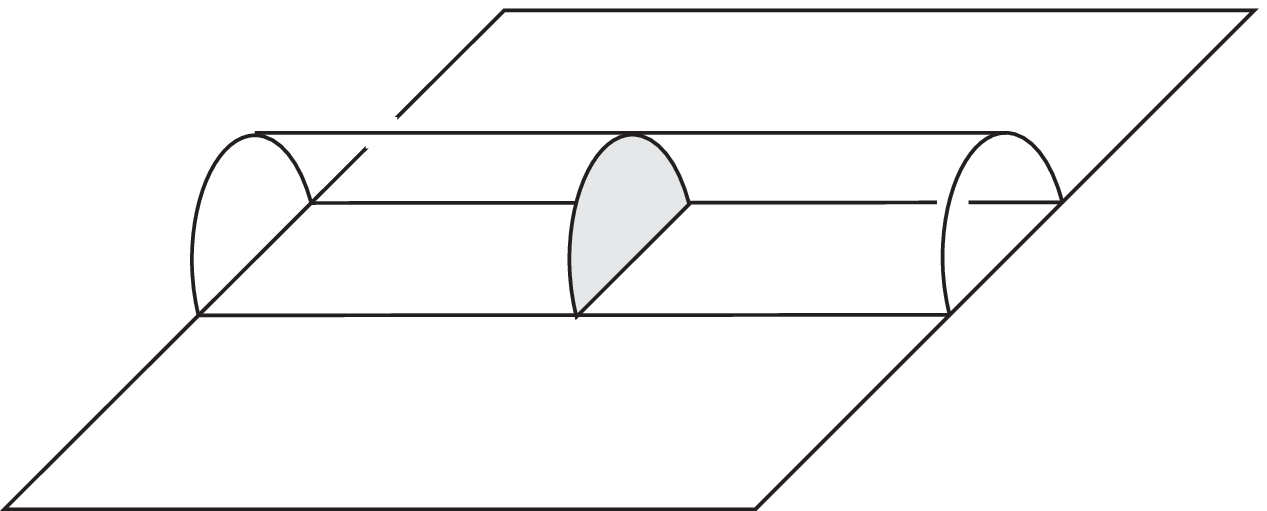} \\
	monogon & bigon 
	\end{tabular}
	\end{center}
	\caption{A monogon and bigon for $P=P_v\cup P_h$}
	\label{trivial}
\end{figure}

\subsection{Definition of essential surfaces}

Let $K$ be a knot or link in $S^3$ and $E(K)$ denote the exterior of $K$.
Let $F$ be a surface properly embedded in $E(K)$, possibly with boundary, except for the 2-sphere or disk, and let $i$ denote the inclusion map $F\to E(K)$.
We say that $F$ is {\em algebraically incompressible} if the induced map $i_*:\pi_1(F)\to \pi_1(E(K))$ is injective, and that $F$ is {\em algebraically boundary incompressible} if the induced map $i_*:\pi_1(F,\partial F)\to \pi_1(E(K),\partial E(K))$ is injective for every choice of two base points in $\partial F$.

A disk $D$ embedded in $E(K)$ is a {\em compressing disk} for $F$ if $D\cap F=\partial D$ and $\partial D$ is an essential loop in $F$.
A disk $D$ embedded in $E(K)$ is a {\em boundary compressing disk} for $F$ if $D\cap F\subset \partial D$ is an essential arc in $F$ and $D\cap \partial E(K)=\partial D-\rm{int}(D\cap F)$.
We say that $F$ is {\em geometrically incompressible} (resp. {\em geometrically boundary incompressible}) if there exists no compressing disk (resp. boundary compressing disk) for $F$.

In this paper, we say that surfaces $F$ embedded in $E(K)$ are {\em geometrically essential} (resp. {\em algebraically essential}) if each component of $F$ is geometrically (resp. algebraically) incompressible, geometrically (resp. algebraically) boundary incompressible and not boundary parallel.
In general, $F$ is algebraically essential if and only if $\partial N(F)\cap E(K)$ is geometrically essential.
If $F$ is two-sided in $E(K)$, namely orientable, then $F$ is algebraically essential if and only if it is geometrically essential.

Let $K$ be a knot or link in $S^3$, and $F$ be closed surfaces embedded in $S^3$.
We say that $F$ is {\em coiled surfaces} for $K$ if $K\subset F$ and $F$ is geometrically essential in the exterior $E(K)$.
We say that a coiled surface $F$ is a {\em Neuwirth surface} if $F-C$ is connected for each component $C$ of $K$.
Surfaces $S$ embedded in $S^3$ is {\em spanning surfaces} for $K$ if $\partial S=K$, and usually $S$ has no closed component.

We remark that any non-trivial, non-splittable knot or link has coiled surfaces since it bounds geometrically incompressible Seifert surfaces $F$ and $\partial N(F)$ gives coiled surfaces.
Similarly, if a knot bounds an algebraically incompressible and boundary incompressible spanning surface $F$, then $\partial N(F)$ gives a Neuwirth surface. 

\subsection{Main theorem}

\begin{theorem}\label{main}
Suppose that $P$ is an essential orientable closed fake surface embedded in the 3-sphere $S^3$ with a vertical-horizontal decomposition $P=P_v\cup P_h$.
Let $F_v$ and $F_h$ be the vertical surfaces and the horizontal surfaces respectively obtained from $P$ by the $+$-smoothing, and $K$ be the knot or link obtained from $P'$ by the $+$-smoothing.
Then $F_v$ and $F_h$ are algebraically essential in $E(K)$ and $K$ is non-splittable and prime.
Moreover when $K$ is a knot, we have that $\Delta(\partial F_v, \partial F_h)=|P''|$ and if $F_v$ is orientable, then $F_h$ is connected.
\end{theorem}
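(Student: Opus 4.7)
The overall strategy is to transfer each of the essentiality hypotheses on $P$---irreducibility of $S^3 \setminus P$, no compressing disk, no monogon, no bigon, and no $2$-sphere component of $P_h$---into the corresponding essentiality statements for $F_v$ and $F_h$ in $E(K)$, and then to deduce non-splittability, primeness, and the boundary-slope distance as consequences.

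First I would prove that $F_v$ and $F_h$ are algebraically essential by working with the two-sided doubles $\partial N(F_v) \cap E(K)$ and $\partial N(F_h) \cap E(K)$, for which algebraic essentiality reduces to geometric essentiality. Given a hypothetical compressing or boundary-compressing disk $D$ for one of these doubles, I would put $D$ in general position with respect to $P$ and reduce $D \cap P$ by standard innermost-loop and outermost-arc arguments. Innermost loops of intersection on $P$ bound subdisks that would be compressing disks for $P$ unless their boundaries bound disks in $P - P'$; the former is forbidden, and the latter can be eliminated using irreducibility of $S^3 \setminus P$. Outermost arcs in $P_v$ or $P_h$, after classifying their endpoints on $P'$ or near $\partial N(K)$, yield either monogons or bigons for $P$, both of which are forbidden. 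After these reductions, $D$ must lie in a single complementary region of $P$, and the absence of $2$-sphere components of $P_h$ rules out $\partial D$ being essential. Boundary-parallelness of a component of $F_v$ or $F_h$ is handled analogously, since a parallelism region would reduce to a compressing disk for $P$ after an innermost argument.

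Non-splittability and primeness of $K$ are then treated by the same scheme applied to a hypothetical splitting or decomposing sphere $S$: isotope $S$ to minimize intersections with $P$, reduce via innermost-disk arguments using essentiality of $P$, and obtain a contradiction via the absence of $2$-sphere components. For the boundary-slope distance, a local computation at each triple point of $P$ shows that the $+$-smoothing contributes exactly one transverse intersection of $\partial F_v$ with $\partial F_h$ on $\partial N(K)$, giving $|\partial F_v \cap \partial F_h| = |P''|$; minimality is then checked by showing that any bigon on $\partial N(K)$ cobounded by arcs of $\partial F_v$ and $\partial F_h$ would extend into $S^3$ to yield a bigon for $P$. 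Finally, when $K$ is a knot and $F_v$ is orientable, $F_v$ is a Seifert surface for $K$, and an orientation argument on $P_h$ shows $K$ is non-separating there, so $F_h = P_h \setminus N(K)$ is connected.

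The principal obstacle will be the reduction argument for disks intersecting $F_v$, since $F_v$ differs from $P_v$ by the two triangles inserted at each triple point. An arc of $D \cap F_v$ may cross these triangles, and one must verify that the outermost-arc reduction genuinely produces a bigon or monogon for $P$ rather than a configuration internal to the triangles. A careful local case analysis at each triple point, tracking how $D$ meets the two triangles and the three sheets of $P$ meeting there, will be the technical heart of the argument.
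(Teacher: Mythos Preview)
Your proposal is correct in spirit and follows the same innermost/outermost-disk template as the paper, but the paper organizes two steps differently in ways that are worth knowing.

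First, the paper sidesteps exactly the ``triangle'' difficulty you flag at the end. Rather than intersecting a putative compressing disk with $P$, the paper first isotopes $F_v$ near each triple point so that $F_v\cap F_h$ is a single arc there, and then minimizes $|D\cap(F_v\cup F_h)|$. An outermost arc $\alpha$ then has both endpoints on these intersection arcs, and the case analysis is simply: (1) $\alpha$ joins arcs at two distinct triple points, (2) $\alpha$ returns to the same arc with $\delta$ on one side of $F_v$, (3) $\alpha$ returns to the same arc with $\delta$ on both sides. Cases 1 and 2 yield bigons for $P$ (here the uniformity of the $+$-smoothing is used to see $\delta$ is non-trivial), and Case 3 is excluded by orientability of $P_v$. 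This is cleaner than tracking how $D$ meets the inserted triangles.

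Second, for boundary incompressibility the paper does \emph{not} run another outermost-arc argument. Instead it quotes two structure lemmas: an orientable incompressible but boundary-compressible surface in a link exterior is a boundary-parallel annulus, and an algebraically incompressible but boundary-compressible non-orientable spanning surface is a M\"obius band bounding the unknot. The first forces a solid torus $V$ in which $P_v\cap V$ is analyzed to produce a monogon or bigon; the second is ruled out by non-triviality of $K$ (established earlier). Your direct approach should also work, but the cited lemmas make this step short. The remaining items---non-splittability, primeness, the slope distance, and connectedness of $F_h$---are handled essentially as you describe.
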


We say that a knot or link $K$ is {\em uniformly twisted} if it can be obtained from $P'$ of an essential orientable closed fake surface $P$ embedded in $S^3$ with a vertical-horizontal decomposition $P=P_v\cup P_h$ by the $+$-smoothing or $-$-smoothing.

In Theorem \ref{main}, if $F_v$ is non-orientable, then $K$ can be put on $\partial N(F_v)$ non-separatively.
Otherwise, $F_h$ is a Neuwirth surface for $K$ by Theorem \ref{main}.
Hence, we have the following corollary.

\begin{corollary}
A uniformly twisted knot satisfies the Neuwirth conjecture.
\end{corollary}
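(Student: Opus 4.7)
The plan is to exploit Theorem~\ref{main} together with the two remarks inserted immediately before the corollary, which already telegraph the strategy. Given a uniformly twisted knot $K$ arising from an essential orientable closed fake surface $P = P_v \cup P_h$ via $\pm$-smoothing, Theorem~\ref{main} already supplies the algebraically essential surfaces $F_v$ and $F_h$; the remaining work is to upgrade one of these into a closed surface in $S^3$ that contains $K$ non-separatingly and is essential in $E(K)$. I would split the argument into two cases according to whether $F_v$ is orientable.

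In the case where $F_v$ is orientable, I would take the closed orientable surface $P_h \subset S^3$ itself as the Neuwirth surface. It contains $K$ by construction, and $P_h \cap E(K) = F_h$ is algebraically essential by Theorem~\ref{main}. Because $P_h$ is two-sided, the earlier observation that algebraic and geometric essentialness coincide for two-sided surfaces upgrades this to geometric essentialness, so $P_h$ is a coiled surface for $K$. The final clause of Theorem~\ref{main} yields that $F_h$ is connected, hence $P_h - K$ is connected, so $K$ sits non-separatingly on $P_h$.

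In the case where $F_v$ is non-orientable, I would instead take $F := \partial N(F_v)$. Pushing $K = \partial F_v$ slightly off $F_v$ lands it on $F$ as the core of the annular piece of $\partial N(F_v)$ arising from a collar of $\partial F_v$. This pushed-off curve is non-separating in $F$: cutting $F$ along it returns the orientable double cover of the interior of $F_v$, which is connected whenever the component of $F_v$ containing $K$ is connected (and I may restrict to that component without loss). The general fact noted in the paper, that $F_v$ is algebraically essential in $E(K)$ if and only if $\partial N(F_v) \cap E(K)$ is geometrically essential, then converts the conclusion of Theorem~\ref{main} into geometric essentialness of $F \cap E(K)$, so $F$ is a coiled surface containing $K$ non-separatingly.

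The main point requiring care is the non-orientable case: verifying that $K$, after being isotoped onto $\partial N(F_v)$, is non-separating there. This is a local statement about the twisted $I$-bundle structure of $N(F_v)$ near $\partial F_v$, modelled on the familiar picture that the boundary of a Möbius band embeds as a non-separating curve on the torus $\partial N(\text{M\"obius band})$. Aside from this small topological check and the case split on orientability of $F_v$, the corollary is essentially a translation of Theorem~\ref{main} into the language of the Neuwirth conjecture.
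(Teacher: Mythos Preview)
Your proposal is correct and follows exactly the paper's approach: the argument in the paper is the two-sentence remark preceding the corollary, which splits on the orientability of $F_v$ and uses $\partial N(F_v)$ in the non-orientable case and $P_h$ (via the connectedness of $F_h$ from Theorem~\ref{main}) in the orientable case. You have simply fleshed out the details of those two sentences, including the non-separating check for $K$ on $\partial N(F_v)$ that the paper leaves implicit.
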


\section{Proof}
The proof is straightforward, but the ``uniformly smoothing" is a key point.

\begin{proof}[Proof of Theorem \ref{main}]
First we isotope $F_v$ near points of Type 3 so that $F_v$ intersects $F_h$ in arcs, which form $\{(x,y,z)||x|\le 1, y=z=0\}$ in the neighborhoods of that points.
Since $F_h$ is orientable and $F_v$ is possibly non-orientable, we need to show that $F_h$ and the (twisted) $\partial I$-bundle $F_v\tilde{\times} \partial I$ are geometrically incompressible and boundary incompressible in $E(K)$.
Then we may assume that in each neighborhood of points of Type 3, $F_h\cap (F_v\tilde{\times} \partial I)$ consists of two arcs.

Suppose that $F_h$ or $F_v\tilde{\times} \partial I$ is compressible in $E(K)$ and let $D$ be a compressing disk for it.
Note that $D$ is in the outside of $F_v\tilde{\times} \partial I$ since $F_v\tilde{\times} \partial I$ is incompressible in $F_v\tilde{\times} I$.
We take $D$ so that $|D\cap (F_v\cup F_h)|$ is minimal.
If $D\cap (F_v\cup F_h)=\emptyset$, then $D$ can be extended to a compressing disk for $P$.
Otherwise, let $\alpha$ be an outermost arc in $D$ and $\delta$ be the corresponding outermost disk of $D$.
We extend $\delta$ so that $\partial \delta\subset F_v\cup F_h$.
Then there are three possibilities, here we note that $\partial \alpha\subset F_v\cap F_h$.

\begin{description}
\item[Case 1] $\alpha$ connects two different arcs of $F_v\cap F_h$ which come from distinct points of Type 3.
\item[Case 2] $\alpha$ connects a single arc of $F_v\cap F_h$ which comes from a single point of Type 3, and $\delta$ lies in a same side of $F_v$ near the point.
\item[Case 3] $\alpha$ connects a single arc of $F_v\cap F_h$ which comes from a single point of Type 3, and $\delta$ lies in both sides of $F_v$ near the point.
\end{description}

In Case 1, $\delta$ can not be trivial since there are two arcs of $P'-P''$ in both sides of $\delta$ (Figure \ref{outermost}).
Here we remember that $F_v$ and $F_h$ are obtained by the $+$-smoothing.
Hence $\delta$ gives a bigon for $P$.
In Case 2, $\delta$ is non-trivial since $|D\cap (F_v\cup F_h)|$ is taken to be minimal.
Hence $\delta$ also gives a bigon for $P$.
Case 3 does not occur since $\partial \delta$ does not run over another side of $F_v$ since $P_v$ is orientable.
Hence $F_h$ and $F_v\tilde{\times} \partial I$ are incompressible in $E(K)$.

\begin{figure}[htbp]
	\begin{center}
	\includegraphics[trim=0mm 0mm 0mm 0mm, width=.6\linewidth]{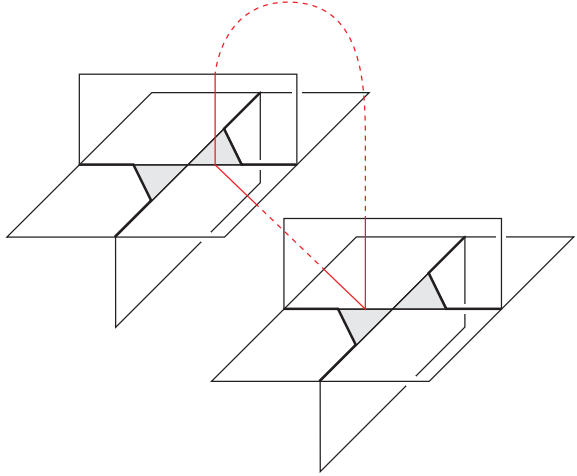}
	\end{center}
	\caption{The boundary of an outermost disk (Case 1)}
	\label{outermost}
\end{figure}

In this stage, we can show that $K$ is non-splittable and non-trivial as follows.
Let $S$ be an essential 2-sphere in $S^3-K$.
By the incompressibility of $F_h$, we may assume that $S$ is disjoint from $F_h$.
Moreover, since $P_v$ is incompressible in the complements of $F_h$, we may assume that $S$ is also disjoint from $P_v$.
Then $S$ bounds a 3-ball in $S^3-K$ since $S^3-P$ is irreducible. 
Hence $K$ is non-splittable.
Suppose that $K$ is trivial.
Then $K$ is a trivial knot since $K$ is non-splittable.
This shows that $P_h$ consists of a single 2-sphere or a single torus since an orientable incompressible surface $F_h$ in a solid torus $E(K)$ is a disk or annulus.
In the former case, it contradicts that $P_h$ has no 2-sphere component.
In the latter case, $F_h$ is an unknotted torus in $S^3$ which bounds a solid torus $V$, and $K$ winds $V$ exactly once.
Then $F_v\cap V$ consists of meridian disks or boundary parallel annuli.
If $F_v\cap V$ consists of meridian disks, then $V$ contains a monogon for $P$, and otherwise $V$ contains a bigon for $P$.
In any case, we have a contradiction.
Hence $K$ is non-trivial.

Next, suppose that $F_h$ is boundary compressible in $E(K)$.
Since it is well-known that a geometrically incompressible, but geometrically boundary compressible orientable surface in a link exterior is a boundary parallel annulus (c.f. \cite[Lemma 2]{O1}), there exists a solid torus $V$ bounded by a component $T$ of $P_h$ such that the component of $K$ contained in $T$ winds around $V$ exactly once longitudinally.
Since $P_v\cap V$ is incompressible in $V$, it consists of meridian disks, or boundary compressible annuli.
If $P_v\cap V$ consists of meridian disks, then the remaining components of $P_v$ having the boundary in $\partial V$ winds around $V$ exactly once longitudinally.
Therefore there exists a monogon for $P$ in $V$.
Otherwise, there exists a bigon for $P$ in $V$ coming from a boundary compressing disk for $P_v\cap V$.
Hence $F_h$ is incompressible and boundary incompressible in $E(K)$.

Suppose that $F_v\tilde{\times} \partial I$ is boundary compressible in $E(K)$.
Since it is well-known that an algebraically incompressible, but algebraically boundary compressible non-orientable spanning surface for a link is a M\"{o}bius band whose boundary is the trivial knot (c.f. \cite[Lemma 2.2]{OT}), $K$ is the trivial knot.
This contradicts that $K$ is non-trivial.

Now we know that both $F_h$ and $F_v$ are not boundary parallel in $E(K)$ since these surfaces are incompressible, boundary incompressible and have integral boundary slopes in $E(K)$.
Hence $F_v$ and $F_h$ are algebraically essential in $E(K)$.

In the following, we show that $K$ is prime.
Suppose that $K$ is non-prime and let $S$ be a decomposing sphere for $K$.
We may assume that $S$ intersects $F_h$ in two arcs which form a loop $l$ with two points $p_1$ and $p_2$ of $K\cap S$.
Then $l$ decomposes $S$ into two disks, say $D_1$ and $D_2$.
By an isotopy, we may assume that $\partial D_i$ does not run over neighborhoods of $P''$ for $i=1,\ 2$.
Since there is an arc of $D_i\cap P_v$ from a point $p_j$ for $j=1,\ 2$, there is an arc joining $p_1$ and $p_2$ in $D_1$ or $D_2$. 
Thus we may assume without loss of generality that $D_1$ intersects $P_v$ in a single arc joining $p_1,\ p_2$ and $D_2$ does not intersect $P_v$ in its interior.
Then we have a bigon for $P$ as the subdisk of $D_1$.
Hence $K$ is prime.

Hereafter, we assume that $K$ is a knot.
Then $F_h$ consists of a single closed surface.
It can be observed that the boundary of $F_v$ and $F_h$ intersects in $|P''|$ points essentially on $\partial N(K)$ since $F_v$ and $F_h$ are obtained by the $+$-smoothing (Figure \ref{slope}).
Thus the distance $\Delta(\partial F_v, \partial F_h)$ is equal to $|P''|$.

\begin{figure}[htbp]
	\begin{center}
	\includegraphics[trim=0mm 0mm 0mm 0mm, width=.6\linewidth]{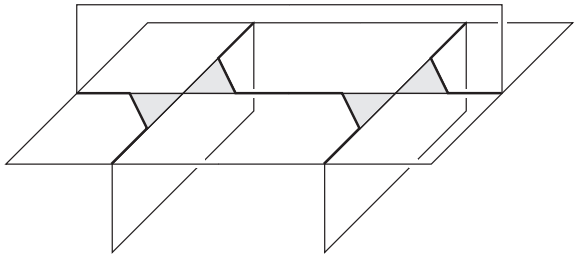}
	\end{center}
	\caption{$+$-smoothing of a closed fake surface $P$}
	\label{slope}
\end{figure}

Suppose that $F_v$ is orientable.
Then $K$ can be oriented by the orientation of $F_v$.
This shows that $F_h$ is connected since $F_v$ and $F_h$ are obtained by the $+$-smoothing (Figure \ref{orientable}).
\begin{figure}[htbp]
	\begin{center}
	\includegraphics[trim=0mm 0mm 0mm 0mm, width=.6\linewidth]{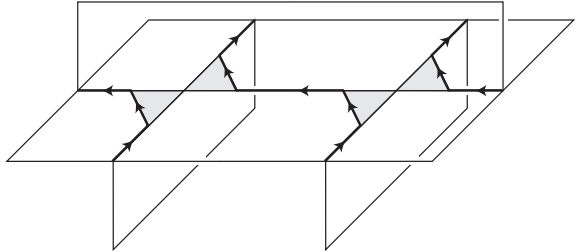}
	\end{center}
	\caption{An orientation of $K$ induced by $F_v$}
	\label{orientable}
\end{figure}
\end{proof}

\section{Example}

In this section, we observe that torus links and alternating links are uniformly twisted, which are typical examples for Theorem \ref{main}.

Let $S^2$ be a 2-sphere embedded in $S^3$ and $G$ be a 2-connected graph embedded in $S^2$ with at least one edge.
Then the closed surface $P_h=\partial N(G)$ decomposes $S^3$ into two handlebodies $V_1$ and $V_2$, where $V_1$ contains $G$.
For each edge $G$, we take parallel copies of a meridian disk of $V_1$ which is dual to an edge of $G$, and
for each region of $S^2-int V_1$, we take parallel copies of a meridian disk of $V_2$ as the region.
Let $P_v$ be a union of these meridian disks.
Then we obtain an essential orientable closed fake surface $P$ with the vertical-horizontal decomposition $P=P_v\cup P_h$.
Let $F_v$ and $F_h$ be the vertical surfaces and the horizontal surfaces respectively obtained from $P$ by the $+$-smoothing or $-$-smoothing, and $K$ be the knot or link obtained from $P'$ by the $+$-smoothing or $-$-smoothing.

\begin{figure}[htbp]
	\begin{center}
	\begin{tabular}{ccc}
	\includegraphics[trim=0mm 0mm 0mm 0mm, width=.2\linewidth]{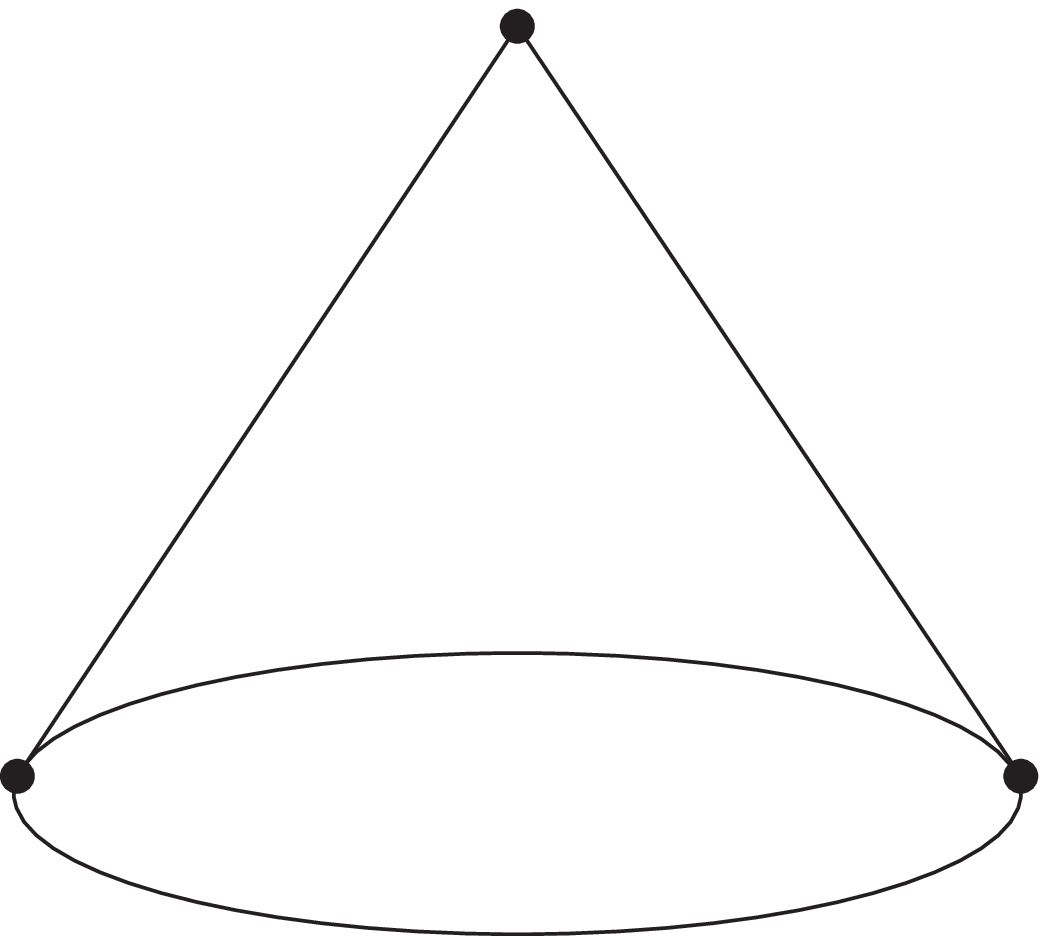} &
	\includegraphics[trim=0mm 0mm 0mm 0mm, width=.35\linewidth]{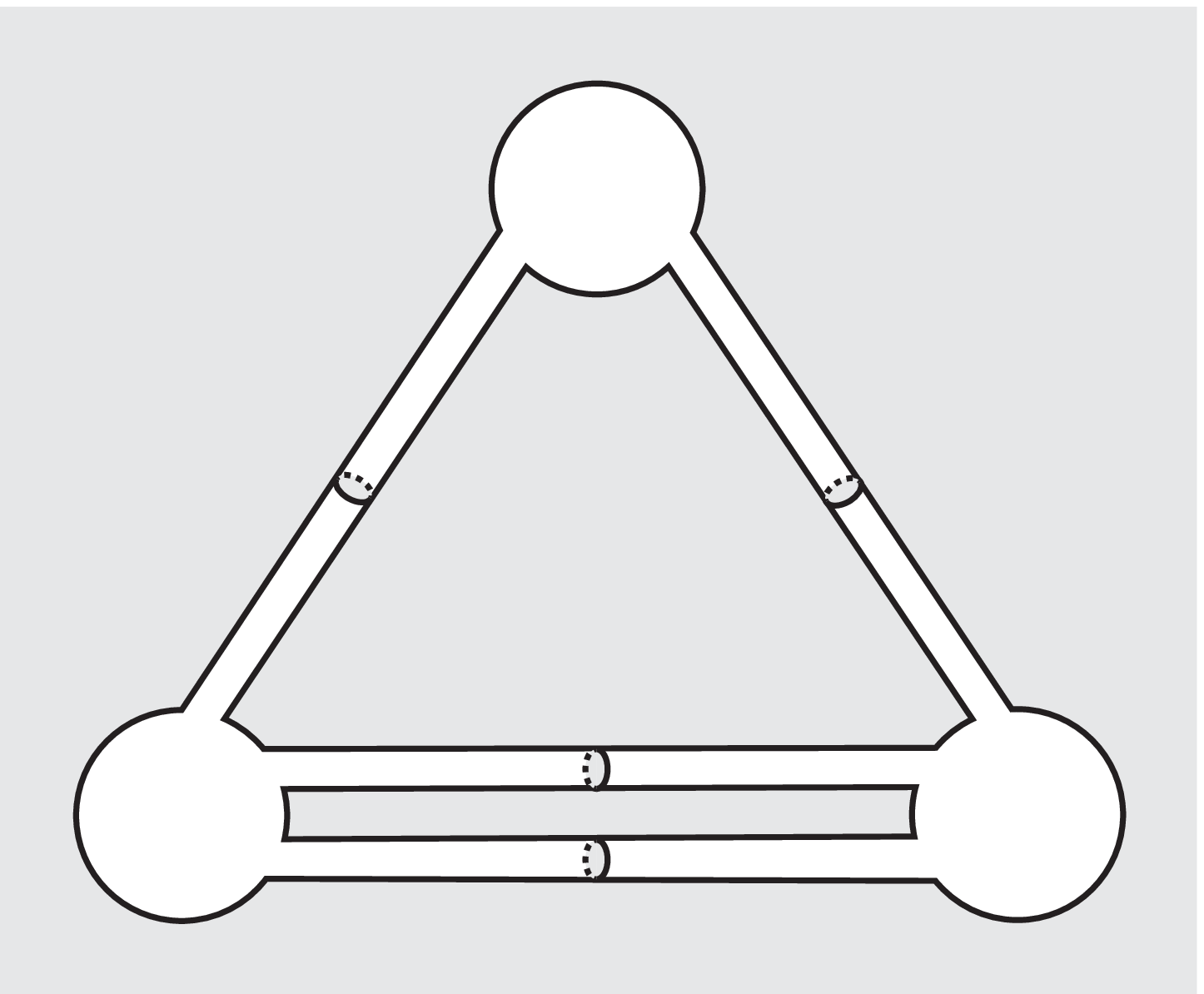} &
	\includegraphics[trim=0mm 0mm 0mm 0mm, width=.33\linewidth]{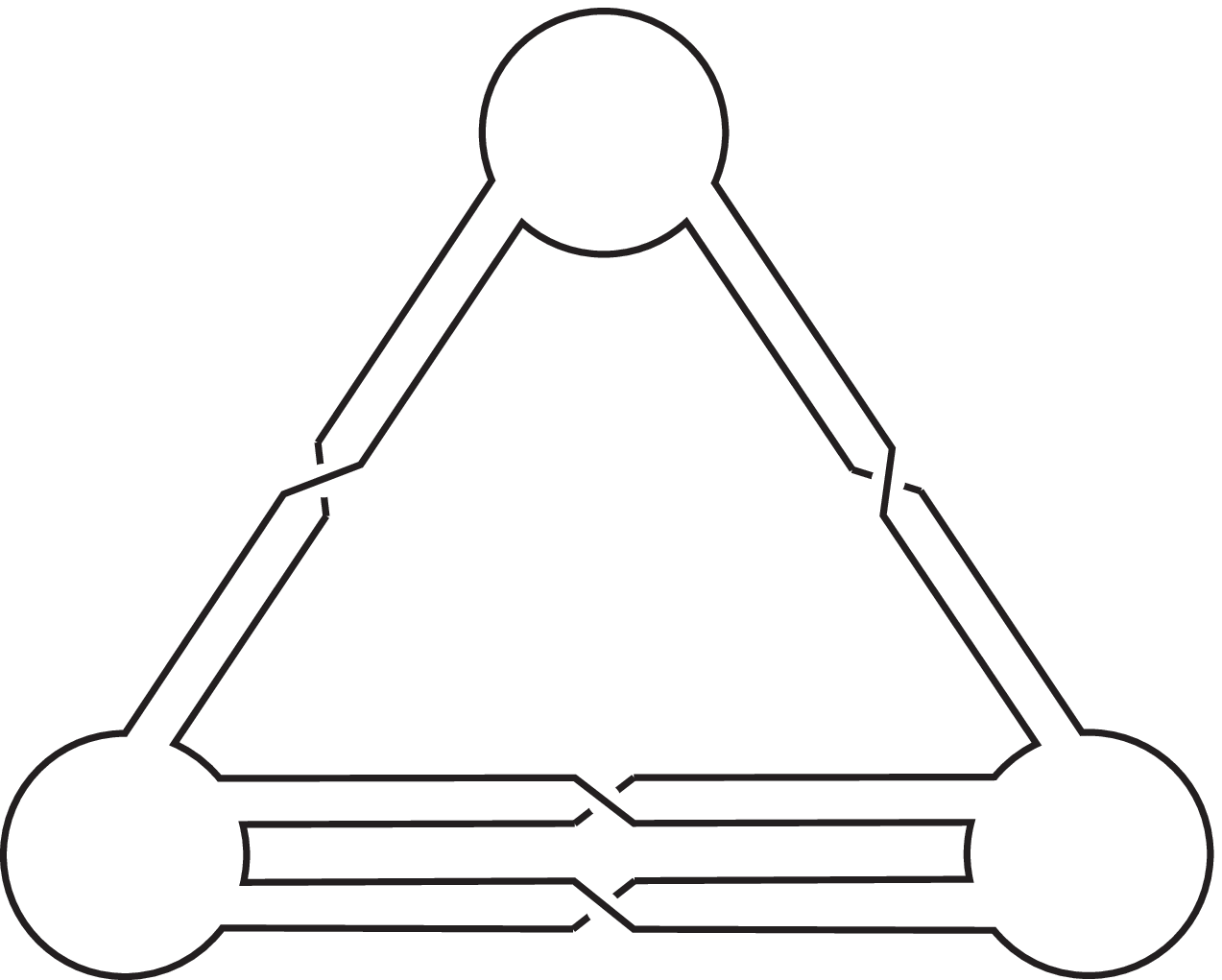} \\
	$2$-connected graph $G$ & closed fake surface $P$ & uniformly twisted knot $K$
	\end{tabular}
	\end{center}
	\caption{The procedure for obtaining a uniformly twisted knot}
	\label{alternating}
\end{figure}

By the construction, torus knots and links are uniformly twisted.
In the above construction, if we take exactly one copy of a meridian disk in $V_1$ or $V_2$, then we obtain a prime alternating knot or link as $K$.
In this case, $F_v$ is a checkerboard surface for the alternating diagram of $K$ and $F_h$ is a boundary of a regular neighborhood of another checkerboard surface.
Similarly, we note that generalized alternating knots and links \cite{O1} are also uniformly twisted.

\section{Problem}

We close this paper by some questions and problems.

In the last section, we construct an essential closed fake surface from a trivial closed surface in the 3-sphere.
We want to know all essential closed fake surfaces obtained from a given closed surface as their horizontal surface.

\begin{problem}
Find a construction of an essential closed fake surface from a given closed surface in the 3-sphere.
\end{problem}

The class of uniformly twisted knots and links is somewhat wide, but it has a restriction ``uniformly twisted''.
We want to know how this class is wide.

\begin{problem}
Does there exist a knot or link which is not uniformly twisted?
\end{problem}

The author think that the answer is yes on this problem since ``$\pm$-smoothing" is a strong condition.
It seems that the condition in Theorem \ref{main} can be weakened.

\begin{problem}
Find a weaker condition on a closed fake surface which gives still the same conclusion of Theorem \ref{main}. 
\end{problem}

The author would expect that all knots and links can be obtained from a closed fake surface which satisfies the weakend condition.

In general, for a knot satisfying the Neuwirth conjecture, does there exist an essential closed fake surface which gives the knot?

\begin{problem}
Let $K$ be a knot with a Neuwirth surface $F_h$, and $F_v$ be an algebraically essential spanning surface for $K$.
Then does there exists an essential closed fake surface $P$ with a vertical-horizontal decomposition $P=P_v\cup P_h$ such that $F_h$ and $F_v$ are obtained from $P$ by the $\pm$-smoothing and $K$ is obtained from $P'$ by the $\pm$-smoothing?
\end{problem}

Finally we extend the Neuwirth conjecture to a link case.

\begin{conjecture}
For any non-trivial, non-split link $L$, there exists a closed surface $F$ containing $L$ such that each component of $L$ is non-separating in $F$ and $F$ is essential in $E(L)$.
\end{conjecture}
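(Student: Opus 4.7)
The plan is to extend the knot-case strategy of this paper to the link setting, producing the required closed surface either from a closed fake surface when $L$ is uniformly twisted or from an essential spanning surface in general. First I would dispose of the uniformly twisted case directly: when $L$ arises from an essential orientable closed fake surface $P=P_v\cup P_h$ via $\pm$-smoothing, Theorem~\ref{main} already ensures that $F_h$ is algebraically essential in $E(L)$, so the only additional task is to realize each component non-separatively on a suitable closed surface. The argument would proceed componentwise: for each component $L_i$, either the orientation-induction of Figure~\ref{orientable} applies and $L_i$ is already non-separating in the component of $F_h$ containing it, or $F_v$ is single-sided along $L_i$ and that component can be pushed onto $\partial N(F_v)$. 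These partial surfaces can then be assembled into one closed essential surface by tubing $F_h$ to $\partial N(F_v)$ along suitable annuli in a regular neighborhood of $L$.

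For general non-trivial, non-split links the natural candidate for $F$ is $\partial N(S)$, where $S$ is a carefully chosen essential spanning surface for $L$; this generalizes the observation in the paper that $\partial N(S)$ of an algebraically incompressible and boundary incompressible spanning surface for a knot is a Neuwirth surface. The decisive observation is that a component $L_i$ of $L$ is non-separating in $\partial N(S)$ if and only if $S$ is single-sided along $L_i$. The conjecture thus reduces to the following construction problem: for each non-trivial, non-split link $L$, produce an algebraically essential spanning surface that is non-orientable along every component of $L$. I would attempt this by starting from a minimal-genus Seifert surface and plumbing a twisted band near each component of $L$, then applying an outermost-arc minimality argument in the spirit of the proof of Theorem~\ref{main} to verify that essentiality survives the plumbing.

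The main obstacle is concentrated entirely in the second step. Even in the knot case, the existence of an algebraically essential non-orientable spanning surface is known only in restricted classes such as alternating or Montesinos knots, and the original Neuwirth conjecture has remained open for decades. The link version compounds this difficulty because one must achieve single-sidedness along every component \emph{simultaneously} while preserving algebraic essentiality, and band-plumbing operations can easily introduce new compressing or boundary-compressing disks. Genuine progress would therefore likely require either a substantially broader construction of essential closed fake surfaces, extending the uniformly twisted class as asked in the problems of this section, or entirely new techniques from sutured manifold theory, branched-surface theory, or guts decompositions.
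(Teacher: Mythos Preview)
The statement you are addressing is not a theorem in the paper but an open \emph{conjecture}: the paper introduces it in the final section with the words ``Finally we extend the Neuwirth conjecture to a link case'' and offers no proof whatsoever. There is therefore nothing to compare your attempt against, and your proposal cannot be judged as a reproduction of the paper's argument because no such argument exists.

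As a research outline your text is honest about its own limitations, but it is worth stating plainly that neither of your two steps is close to a proof. In the uniformly twisted case, the componentwise dichotomy is fine, but the sentence ``these partial surfaces can then be assembled into one closed essential surface by tubing $F_h$ to $\partial N(F_v)$ along suitable annuli'' hides a genuine difficulty: tubing two essential surfaces along an annulus in $\partial E(L)$ does not in general produce an essential surface, and you give no mechanism to rule out the new compressing disks that such tubes typically create. In the general case you correctly observe that the problem reduces to producing, for every non-trivial non-split link, an algebraically essential spanning surface that is one-sided along every component; but this is at least as hard as the original Neuwirth conjecture for knots, which, as you yourself note, remains open. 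The plumbing-a-twisted-band idea has been tried in the knot setting and is known to fail to preserve essentiality in general, so the outermost-arc argument you invoke would not go through without substantial new input. In short, what you have written is a reasonable summary of why the conjecture is hard, not a proof of it.
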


\bigskip

\noindent{\bf Acknowledgements.}
I would like to thank Yuya Koda for the careful reading and the detailed report.

\bibliographystyle{amsplain}

\end{document}